\documentclass[12pt]{amsart}

\usepackage[english]{babel}
\usepackage{mathrsfs,amssymb}
\usepackage{mathtools}
\usepackage[colorlinks, citecolor = blue]{hyperref}

\usepackage{amsmath}

\usepackage[shortlabels]{enumitem}
\setlist[itemize]{leftmargin=25pt}
\setlist[enumerate]{leftmargin=25pt}

\usepackage[a4paper,left=3cm,right=3cm]{geometry}

\newtheorem{theorem}{Theorem}[section]

\newtheorem{prop}[theorem]{Proposition}

\theoremstyle{definition}

\theoremstyle{remark}

\numberwithin{equation}{section}
\usepackage[colorinlistoftodos,prependcaption,textsize=small]{todonotes}

\DeclareMathOperator*{\esssup}{ess\,sup}

\let \la=\lambda

\let \o=\omega
\let \a=\alpha
\let \f=\varphi

\let \O=\Omega

\allowdisplaybreaks

\begin{document}
\title[Sparse domination of Calder\'on--Zygmund operators]
{Sparse domination of Calder\'on--Zygmund operators by mean oscillations}

\author[A.K. Lerner]{Andrei K. Lerner}
\address[A.K. Lerner]{Department of Mathematics,
Bar-Ilan University, 5290002 Ramat Gan, Israel}
\email{lernera@math.biu.ac.il}

\thanks{The author was supported by ISF grant no. 1035/21.}

\begin{abstract}
We show that if $T$ is a Dini-continuous Calder\'on--Zygmund operator
satisfying $T(1)=0$, then the usual sparse domination for $T$ can be
sharpened by replacing local averages with local mean oscillations. This
extends a result of Benea and Bernicot for smoother kernels to the more
general Dini-continuous setting.

As an application, we characterize the Calder\'on--Zygmund
operators for which a pointwise Sobolev-type inequality holds: this is the case if and
only if $T(1)\in L^\infty$. This answers a recent question of Hoang, Moen and P\'erez.
\end{abstract}

\keywords{Calder\'on--Zygmund operators, sparse domination, mean oscillation, Sobolev-type inequality.}
\subjclass[2020]{42B20, 42B25}

\maketitle

\section{Introduction}

Let $T$ be a Calder\'on--Zygmund operator on ${\mathbb R}^n$.
A well-known pointwise sparse domination theorem asserts that, for every
integrable compactly supported function~$f$, there exists a sparse family
${\mathcal S}$ such that
\begin{equation}\label{spd}
|Tf(x)|\lesssim \sum_{Q\in{\mathcal S}}\langle |f|\rangle_Q\chi_Q(x)
\end{equation}
for almost every $x\in{\mathbb R}^n$; see, e.g., \cite{LO20} for a short proof.
Here $\langle f\rangle_Q:=|Q|^{-1}\int_Q f$, and sparseness means that there
exists $\eta\in(0,1)$, depending only on $n$, such that each $Q\in{\mathcal S}$
contains a measurable set $E_Q$ with $|E_Q|\ge\eta |Q|$, while the sets
$E_Q$ are pairwise disjoint.

The literature on sparse domination is vast; for background, motivation,
and an extensive bibliography, see \cite{LLO22}.

In \cite{BB18}, Benea and Bernicot showed that, for a subclass of
Calder\'on--Zygmund operators satisfying $T(1)=0$, the averages
$\langle |f|\rangle_Q$ in \eqref{spd} can be replaced by the mean oscillations
$$
\Omega(f;Q):=|Q|^{-1}\int_Q |f-\langle f\rangle_Q|.
$$
A careful examination of the proof in \cite{BB18} shows that the argument works
under a $\log^2$-Dini assumption on the kernel. More precisely, if $\omega$
denotes the modulus of continuity appearing in the smoothness condition for the
kernel, then the method requires
$
\int_0^1 \omega(t)\log^2\frac{1}{t}\,\frac{dt}{t}<\infty .
$

In this note, we show that the same result holds under the usual Dini
assumption
$
\int_0^1 \omega(t)\,\frac{dt}{t}<\infty .
$
Our proof is close in spirit to the recent work of Conde-Alonso--Lorist--Rey
\cite{CLR26} on cancellative sparse domination, where a different improvement
of \eqref{spd} was obtained by replacing the averages $\langle |f|\rangle_Q$
with cancellation-sensitive quantities of another type.

Our first result is the following.

\begin{theorem}\label{mr}Let $T$ be a Calder\'on--Zygmund operator satisfying $T(1)=0$.
For every compactly supported function $f\in L^1({\mathbb R}^n)$, there exists an $\eta_n$-sparse family ${\mathcal S}$ such that
$$|Tf(x)|\lesssim \sum_{Q\in {\mathcal S}}\O(f;Q)\chi_Q(x)$$
for almost every $x\in {\mathbb R}^n$.
\end{theorem}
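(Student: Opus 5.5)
We follow the Lerner--Ombrosi recursive scheme from \cite{LO20}, but use $T(1)=0$ at every step to subtract the local average of $f$. Fix a cube $Q_0$ with $\operatorname{supp} f\subset Q_0$. Since $T(1)=0$, we have $Tf=T(f-c)$ for every constant $c$, up to the usual interpretation of $T(1)=0$ via truncations and the BMO-type extension of $T$ to $L^\infty$ functions. The goal is a local estimate of the form
\begin{equation*}
|Tf(x)|\chi_{Q_0}(x)\lesssim \Omega(f;3Q_0)+\sum_j |T(f\chi_{3P_j}-\langle f\rangle_{3P_j}\chi_{3P_j})|\chi_{P_j}+\text{(tails)},
\end{equation*}
where $\{P_j\}$ are disjoint subcubes of $Q_0$ with $\sum|P_j|\le \frac12|Q_0|$. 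Iterating this estimate yields the sparse family: the sets $E_Q=Q\setminus\bigcup P_j$ are disjoint, and the dilations $3Q$ are handled by the three-lattice trick. Outside $Q_0$ we cover ${\mathbb R}^n$ by dyadic annuli. There $T(f-\langle f\rangle_{Q_0}\chi_{Q_0})$ is controlled, using mean zero and the Dini condition, by $\sum_k\omega(2^{-k})\Omega(f;2^kQ_0)$-type terms. We absorb these by choosing $Q_0$ large, or we treat them in the standard way.

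\textbf{Local step.} Write $g=(f-\langle f\rangle_{3Q_0})\chi_{3Q_0}$, so that $\|g\|_{L^1}=|3Q_0|\,\Omega(f;3Q_0)$. On $Q_0$ the difference $Tf-Tg$ is the operator applied to the constant $\langle f\rangle_{3Q_0}$ restricted to $(3Q_0)^c$. By $T(1)=0$ this is $-T(\langle f\rangle_{3Q_0}\chi_{3Q_0})$, a term which is not small in general.

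This is the main obstacle. We handle it by never separating the average. Instead we define the exceptional set through the grand maximal truncation $\mathcal M_{T}$ applied to $g$ (and the maximal function of $g$), with thresholds at level $C\,\Omega(f;3Q_0)$. The weak $(1,1)$ bounds give $|E|\le \frac{1}{2^{n+2}}|Q_0|$. A Calder\'on--Zygmund stopping argument then selects the cubes $P_j$.

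On $P_j$ we must compare $g$ to the new localized, mean-subtracted function $f\chi_{3P_j}-\langle f\rangle_{3P_j}\chi_{3P_j}$. The discrepancy between the two is
\begin{equation*}
(\langle f\rangle_{3P_j}-\langle f\rangle_{3Q_0})\chi_{3P_j}.
\end{equation*}
The operator $T$ applied to this constant times $\chi_{3P_j}$, evaluated on $P_j$, is not pointwise bounded. What we can do is write the coefficient as a telescoping sum over the dyadic chain $3P_j\subset\cdots\subset 3Q_0$, namely $\sum_k(\langle f\rangle_{R_{k}}-\langle f\rangle_{R_{k+1}})$, with each term bounded by $2^n\,\Omega(f;R_{k+1})$.

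To get pointwise control, we instead pass the term to the exceptional structure as follows. We use $T(\chi_{3P_j})=-T(\chi_{(3P_j)^c})$. On $P_j$, this latter quantity equals the integral over the annuli of the chain. By the Dini smoothness of the kernel, its oscillation on $P_j$ is controlled, and its value is controlled by the stopping condition at the parent scale. The sum over the chain then produces $\sum_k\omega(2^{-k})$-weighted mean oscillations of the ancestor cubes. Summing over $k$ with the Dini condition gives a bound of $\lesssim\sum$ of oscillations over ancestors. These are exactly cubes in the sparse family already, each counted with weights summing to $\int_0^1\omega(t)\,dt/t$. This is the key point: the usual Dini integral suffices, with no $\log^2$ loss, because each ancestor is charged once with weight $\omega(2^{-k})$ rather than with a logarithmic multiplicity.

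\textbf{Conclusion.} Iterating the local step and collecting terms gives $|Tf|\lesssim\sum_{Q\in\mathcal S}\Omega(f;3Q)\chi_Q$ plus the reweighted ancestor sums. The ancestor sums are bounded by the same sparse sum, since $\sum_k\omega(2^{-k})<\infty$ and a cube charged from depth $k$ still satisfies $\chi_{P}\le \chi_{Q}$. Finally, we replace $3Q$ by dyadic cubes from the $3^n$ adjacent dyadic lattices.

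The hard step is the treatment of the constant-difference term $T(\chi_{3P_j})$ on $P_j$. The first thing to try is exactly the telescoping-plus-Dini charging described above, in the spirit of \cite{CLR26}.
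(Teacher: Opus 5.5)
There is a genuine gap, and it sits exactly at the step you flag as hard: the term $(\langle f\rangle_{3P_j}-\langle f\rangle_{3Q_0})\,T(\chi_{3P_j})$ on $P_j$. The telescoping-plus-Dini charging does not give what you claim.

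\begin{itemize}
\item The smoothness condition \eqref{smooth} controls only differences $K(x,y)-K(x',y)$. Splitting $T(\chi_{(3P_j)^c})$ into annular pieces therefore gives $\omega(2^{-k})$-decay for the \emph{oscillation} over $P_j$ of the $k$-th piece, not for its size.
\item The factor $T(\chi_{3P_j})$ is common to all terms of $\sum_k(\langle f\rangle_{R_k}-\langle f\rangle_{R_{k+1}})$. Nothing attaches the weight $\omega(2^{-k})$ to the $k$-th difference. Bounding each difference by $2^n\Omega(f;R_{k+1})$ just gives the unweighted sum along the whole chain, of length $\log_2(\ell(Q_0)/\ell(P_j))$. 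That is precisely the logarithmic loss you wanted to avoid.
\item The $R_k$ come from all dyadic ancestors of $P_j$ inside $Q_0$, not from stopping cubes. They are not in the sparse family, so the final charging has nothing to charge to.
\item $T(1)=0$ is an identity in $\mathscr D_0'$, i.e.\ modulo constants. So $Tf=T(f-c)$ and $T(\chi_{3P_j})=-T(\chi_{(3P_j)^c})$ are not pointwise identities, and only oscillation information comes out of them.
\end{itemize}

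The missing idea is Proposition \ref{cr}. If $T(1)=0$, then a.e.\ on $Q$ one has $T(\chi_{Q^*})=F_Q+c_Q$ with $F_Q(x)=\int_{\mathbb R^n\setminus Q^*}(K(x_Q,y)-K(x,y))\,dy$. Hence the oscillation of $T(\chi_{Q^*})$ on $Q$ is $\lesssim\int_0^1\omega(t)\,dt/t$: the Dini sum appears once, multiplied by a single coefficient. Using weak type $(1,1)$ to fix $c_Q$, this even gives $\|T(\chi_{Q^*})\|_{L^\infty(Q)}\lesssim 1$, so your assertion that this term is not pointwise bounded is false.

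The paper then estimates the coefficient only once, $|m_f(R^*)-m_f(P^*)|\le 2M((f-m_f(P^*))\chi_{P^*})(x)$. It feeds everything into the operator-free principle of \cite{LLO22} (Theorem \ref{opfr}) with $f_Q=T((f-m_f(Q^*))\chi_{Q^*})$. That principle requires only oscillation control of $f_{R,P}$ plus the weak-type bound for the level, so $c_Q$ never matters.

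Your recursion in the style of \cite{LO20} could be repaired the same way. Take $Q^*=5\sqrt n\,Q$ in place of $3Q$, so that \eqref{smooth} applies, and set $g=(f-\langle f\rangle_{Q_0^*})\chi_{Q_0^*}$. At a non-exceptional $x\in P_j$, bound $|\langle f\rangle_{P_j^*}-\langle f\rangle_{Q_0^*}|\le\langle|g|\rangle_{P_j^*}\le Mg(x)\lesssim\Omega(f;Q_0^*)$, and use $\|T(\chi_{P_j^*})\|_{L^\infty(P_j)}\lesssim1$. No telescoping is needed.

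Your treatment outside $Q_0$ also fails as written. It uses the mean zero of $f-\langle f\rangle_{Q_0}$ against smoothness of $K$ in $y$, which is not assumed. It also estimates $T(f-\langle f\rangle_{Q_0}\chi_{Q_0})$, which differs from $Tf$ by $\langle f\rangle_{Q_0}T(\chi_{Q_0})$. The paper avoids tails by partitioning $\mathbb R^n$ into cubes $Q_j$ with $\operatorname{supp}f\subset Q_j^*$ and $|\operatorname{supp}f|<\frac12|Q_j^*|$. Then $m_f(Q_j^*)=0$ and $Tf=T((f-m_f(Q_j^*))\chi_{Q_j^*})$. With averages in place of medians one would use $|\langle f\rangle_{Q_j^*}|\le2\Omega(f;Q_j^*)$ instead.
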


As in the aforementioned work \cite{CLR26}, the proof of Theorem \ref{mr} is based on a general sparse domination principle established in \cite{LLO22}.
Observe that the condition $T(1)=0$ is a standard cancellation assumption in the theory of Calder\'on--Zygmund operators. It already appears explicitly in the classical paper of David and Journ\'e \cite{DJ84}
on the $T(1)$ theorem, where the notation $T(1)$ became one of the central notions of the subject. In the usual distributional interpretation, this condition means that $T$ annihilates constants. It is satisfied by many important Calder\'on--Zygmund operators, for instance by convolution singular integrals whose kernels have the appropriate cancellation.

We apply Theorem \ref{mr} to a question addressed in a series of recent papers by
Hoang--Moen--P\'erez \cite{HMP25_1,HMP25_2,HMP25_3}. Recall the classical
Sobolev pointwise inequality, which asserts that, for every
$f\in C^1_c({\mathbb R}^n)$, $n\ge 2$, and every $x\in{\mathbb R}^n$,
\begin{equation}\label{sob}
|f(x)|\le c_n I_1(|\nabla f|)(x),
\end{equation}
where $I_1$ is the Riesz potential operator of order $1$.

It was shown in \cite{HMP25_1,HMP25_2,HMP25_3} that $|f|$ on the
left-hand side of \eqref{sob} can be replaced by several different operators,
in particular by the Hardy--Littlewood maximal operator $M$, and by the rough
homogeneous singular integral operator $T_\Omega$ when
$\Omega\in L^{n,\infty}({\mathbb S}^{n-1})$. In \cite{HMP25_2}, the authors
asked whether such a result also holds for Calder\'on--Zygmund operators of
non-convolution type. As an almost immediate consequence of Theorem \ref{mr},
we obtain a characterization of the Calder\'on--Zygmund operators for which
this is true.

\begin{theorem}\label{sobt}
Let $T$ be a Calder\'on--Zygmund operator. Then the Sobolev-type inequality
$$
|Tf(x)|\lesssim I_1(|\nabla f|)(x)
$$
holds for every $f\in C^1_c({\mathbb R}^n)$, $n\ge 2$, and for almost every
$x\in{\mathbb R}^n$, if and only if $T(1)\in L^\infty$.
\end{theorem}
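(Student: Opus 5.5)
Both directions can be reduced to Theorem \ref{mr} together with the classical Poincar\'e-type bound for mean oscillations.

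\emph{Sufficiency.} Assume $T(1)\in L^\infty$. By the standard $T(1)$ theory there is a paraproduct-type operator that absorbs the symbol. Concretely, let $b:=T(1)\in L^\infty\subset BMO$. The pointwise multiplication operator $f\mapsto bf$ is a Calder\'on--Zygmund operator only in a degenerate sense, so we do not subtract it. Instead we write
$$Tf = (Tf - bf) + bf.$$
Here $bf$ is trivially bounded by $\|b\|_\infty|f|\lesssim I_1(|\nabla f|)$ by \eqref{sob}. We want $\tilde T f:= Tf - bf$ to satisfy $\tilde T(1)=0$, but $\tilde T$ has kernel $K(x,y)$ off the diagonal plus a multiplier on the diagonal. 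Theorem \ref{mr}, however, is stated for Calder\'on--Zygmund operators with $T(1)=0$. Adding a bounded multiplier keeps the same off-diagonal kernel and keeps $L^2$-boundedness, so $\tilde T$ is again a Calder\'on--Zygmund operator (with the same Dini kernel), and $\tilde T(1)=b-b=0$ in the distributional sense. Applying Theorem \ref{mr} to $\tilde T$ gives a sparse family ${\mathcal S}$ with
$$|\tilde Tf(x)|\lesssim \sum_{Q\in{\mathcal S}}\O(f;Q)\chi_Q(x).$$
By the Poincar\'e inequality, $\O(f;Q)\lesssim \ell(Q)\langle|\nabla f|\rangle_Q$. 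It therefore remains to show
$$\sum_{Q\in{\mathcal S}}\ell(Q)\langle |\nabla f|\rangle_Q\chi_Q(x)\lesssim I_1(|\nabla f|)(x).$$
This is classical and holds for any family of cubes with bounded overlap at each scale. Fix $x$ and group the sparse cubes containing $x$ by dyadic side length $2^k$, passing to dyadic lattices via the three-lattice theorem. At each scale there is one dyadic cube $Q\ni x$, and on it $\ell(Q)|Q|^{-1}\chi_Q(y)\lesssim |x-y|^{1-n}\chi_{\{|x-y|\le \sqrt n 2^k\}}$. Summing over $k$, the weights $2^{k(1-n)}\chi_{|x-y|\lesssim 2^k}$ add up to $\lesssim |x-y|^{1-n}$ because $n\ge 2$ makes the geometric series converge. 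This gives the bound by $I_1(|\nabla f|)(x)$. Sparseness is not even needed here.

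\emph{Necessity.} Suppose $|Tf|\lesssim I_1(|\nabla f|)$ a.e. for every $f\in C^1_c$. Take $f_R=\varphi(\cdot/R)$ with $\varphi\in C_c^1$ and $\varphi=1$ on $B(0,1)$. Then
$$I_1(|\nabla f_R|)(x)\lesssim R^{-1}\int_{|y|\sim R}|x-y|^{1-n}\,dy\lesssim 1$$
uniformly in $x$ and $R$. Hence $\|Tf_R\|_\infty\le C$. Now $Tf_R\to T(1)$ in the usual distributional sense modulo constants, defined through pairings with mean-zero test functions. For a fixed $\psi\in C_c^\infty$ with $\int\psi=0$, the pairing $\langle Tf_R,\psi\rangle$ converges to $\langle T(1),\psi\rangle$, since the $(1-f_R)$ part is controlled by kernel smoothness. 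So $|\langle T(1),\psi\rangle|\le C\|\psi\|_1$ for all such $\psi$. This says $T(1)$ agrees, modulo constants, with a functional bounded on the mean-zero part of $L^1$, and by Hahn--Banach it coincides modulo constants with an $L^\infty$ function. A weak-$*$ limit of $Tf_R$ also gives this $L^\infty$ representative directly. Hence $T(1)\in L^\infty$.

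The main obstacle I expect is the sufficiency step of justifying that $\tilde T=T-b\cdot$ fits the hypotheses of Theorem \ref{mr}. In particular I need the sparse-domination argument of \cite{LLO22} to tolerate a bounded multiplier on the diagonal, through its weak-type $(1,1)$ bound and the grand maximal truncation. If that is problematic, I would instead subtract a paraproduct $\pi_b$, which is a genuine Calder\'on--Zygmund operator with $\pi_b(1)=b$. I would then bound $|\pi_b f|$ directly by a sparse sum of $\ell(Q)\langle|\nabla f|\rangle_Q$ using $\|b\|_\infty$ and the cancellation of wavelets.
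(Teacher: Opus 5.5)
Your proposal is correct and follows essentially the same route as the paper. Sufficiency splits $T=\widetilde T+bI$ as in Proposition \ref{t1b}, then uses Theorem \ref{mr}, Poincar\'e, the three lattice reduction and the dyadic bound by $I_1$. Necessity uses the uniform bound $\|T(\theta_R)\|_{L^\infty}\lesssim 1$ together with Proposition \ref{eqdef} and Hahn--Banach.

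Your worry about the multiplier is unfounded: $f\mapsto bf$ is bounded on $L^1$ and vanishes off $\operatorname{supp} f$, so $\widetilde T$ is weak type $(1,1)$ with the same kernel, and the paraproduct fallback is not needed. One small correction: you say sparseness is not needed, but sparseness is exactly what gives the bounded overlap at each scale you invoke. At most $4^n/\eta$ cubes of an $\eta$-sparse family with side length in $[2^k,2^{k+1})$ can contain a given point.
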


The condition $T(1)\in L^\infty$ is less standard than the classical
assumptions $T(1)=0$ and $T(1)\in BMO$, and may be viewed as intermediate
between them. In fact, it means precisely, see Proposition \ref{t1b} below, that
$T$ can be written in the form
$$
T=\widetilde T+bI,
$$
where $\widetilde T$ is a Calder\'on--Zygmund operator satisfying
$\widetilde T(1)=0$, $b\in L^\infty$, and $I$ is the identity operator.
Thus the proof of Theorem \ref{sobt} reduces to two parts: the case of
operators $\widetilde T$ satisfying $\widetilde T(1)=0$, which follows from
Theorem \ref{mr}, and the case of multiplication operators $f\mapsto bf$,
which follows immediately from \eqref{sob}.

The paper is organized as follows. Section 2 contains some preliminary, mostly known, facts. In Section 3 we prove Theorems \ref{mr} and \ref{sobt}.

\section{Preliminaries}
\subsection{Calder\'on--Zygmund operators}
We say that $T$ is a Calder\'on-Zygmund operator if $T$ is a linear operator of weak type $(1,1)$ represented by
$$Tf(x)=\int_{{\mathbb R}^n}K(x,y)f(y)dy\quad\text{for all}\,\,x\not\in \text{supp}\,f$$
with kernel $K$ satisfying the smoothness condition
\begin{equation}\label{smooth}
|K(x,y)-K(x',y)|\le \o\left(\frac{|x-x'|}{|x-y|}\right)\frac{1}{|x-y|^n}
\end{equation}
for $|x-x'|\le |x-y|/2$, where $\o$ is a modulus of continuity such that $\int_0^1\o(t)\frac{dt}{t}<\infty$.

Let us recall how the distribution $T(1)$ is defined. Denote by
${\mathscr D}_0({\mathbb R}^n)$ the space of all smooth compactly supported
functions on ${\mathbb R}^n$ with integral zero. Let
$\varphi\in {\mathscr D}_0({\mathbb R}^n)$. Choose
$\eta\in C_c^\infty({\mathbb R}^n)$ such that $\eta=1$ on a neighborhood of
$\operatorname{supp}\varphi$. Then $T(1)$ is defined by
$$
\langle T(1),\varphi\rangle
:=
\langle T(\eta),\varphi\rangle
+
\int_{{\mathbb R}^n}
\left(\int_{{\mathbb R}^n}K(x,y)\varphi(x)\,dx\right)
(1-\eta(y))\,dy .
$$
This definition is independent of the choice of $\eta$; see
\cite[Def. 4.1.16]{G14} for details.

The distribution $T(1)$ can be also defined in the following equivalent way.

\begin{prop}\label{eqdef} Let $\theta\in C^{\infty}$, $\theta(x)=1$ for $|x|<1$ and $\theta(x)=0$ for $|x|>2$. Then for $\theta_R(x):=\theta(x/R)$ we have
$$\langle T(1),\varphi\rangle=\lim_{R\to \infty}\langle T(\theta_R),\f\rangle$$
for every $\f\in {\mathscr D}_0({\mathbb R}^n)$.
\end{prop}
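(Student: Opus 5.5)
Fix $\varphi\in{\mathscr D}_0({\mathbb R}^n)$ with $\operatorname{supp}\varphi\subset B(0,r)$. The plan is to apply the definition of $\langle T(1),\varphi\rangle$ with the admissible cutoff $\eta=\theta_R$ itself, for each $R$ large, and show that the correction term tends to zero. Since $\theta_R=1$ on $B(0,R)$, it equals $1$ on a neighborhood of $\operatorname{supp}\varphi$ as soon as $R>r$. The definition is independent of the cutoff, so for every $R>r$ we have
$$
\langle T(1),\varphi\rangle=\langle T(\theta_R),\varphi\rangle+\int_{{\mathbb R}^n}\Big(\int_{{\mathbb R}^n}K(x,y)\varphi(x)\,dx\Big)(1-\theta_R(y))\,dy .
$$
It therefore suffices to prove that the last integral tends to $0$ as $R\to\infty$.

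To estimate it, we use $\int\varphi=0$. The factor $1-\theta_R(y)$ vanishes for $|y|<R$, so only $|y|\ge R$ contributes. For such $y$ and $x\in\operatorname{supp}\varphi$ we have $|x|<r\le R/2\le |y|/2$ once $R\ge 2r$, and so $|x-0|\le |y|/2$. Write
$$
\int K(x,y)\varphi(x)\,dx=\int\big(K(x,y)-K(0,y)\big)\varphi(x)\,dx .
$$
We then need the smoothness condition \eqref{smooth} with the roles of $x,x'$ taken as $x,0$. It requires $|x|\le|x-y|/2$, whereas we only have $|x|\le |y|/2$. Since $|x-y|\ge |y|-r\ge |y|/2$ for $|y|\ge 2r$, we can instead require $|y|\ge 4r$; then $|x|<r\le |y|/4\le |x-y|/2$. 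Using $|x-y|\ge |y|/2$ and taking $\omega$ nondecreasing (as a modulus of continuity is), this gives
$$
\Big|\int K(x,y)\varphi(x)\,dx\Big|\le \|\varphi\|_{L^1}\,\omega\Big(\frac{2r}{|y|}\Big)\frac{2^n}{|y|^n}.
$$

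Since $|1-\theta_R|\le \|1-\theta\|_\infty$, the correction term is bounded by a constant times
$$
\int_{|y|\ge R}\omega\Big(\frac{2r}{|y|}\Big)\frac{dy}{|y|^n}=c_n\int_R^\infty \omega\Big(\frac{2r}{s}\Big)\frac{ds}{s}=c_n\int_0^{2r/R}\omega(t)\frac{dt}{t}.
$$
This tends to $0$ as $R\to\infty$ by the Dini condition. The same bound shows the double integral is absolutely convergent, which justifies writing the inner integral as above.

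The only delicate point is the bookkeeping needed to get the condition $|x-x'|\le|x-y|/2$ of \eqref{smooth}, together with the observation that $\theta_R$ is itself an admissible $\eta$. Once we take $R\ge 4r$, everything is routine. Hence $\langle T(\theta_R),\varphi\rangle\to\langle T(1),\varphi\rangle$ as $R\to\infty$, and in fact the difference is $O\big(\int_0^{2r/R}\omega(t)\,dt/t\big)$.
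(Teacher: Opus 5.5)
Your proof is correct and is essentially the standard argument the paper has in mind when it says the proposition follows easily from the definitions (it cites Grafakos, p.~240): take $\eta=\theta_R$ in the definition of $T(1)$, subtract $K(0,y)$ using $\int\varphi=0$, and use the smoothness and Dini conditions to show the tail term is $O\big(\int_0^{2r/R}\omega(t)\,dt/t\big)\to 0$. As a small simplification, you could apply the smoothness estimate with the roles of $0$ and $x$ interchanged; this needs only $|x|\le |y|/2$ and gives the bound $\omega(|x|/|y|)\,|y|^{-n}$ directly, so the extra restriction $|y|\ge 4r$ is not needed.
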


The proof of this proposition follows easily from the definitions, and it can be found in \cite[p. 240]{G14}.

In what follows, the condition $T(1)\in L^\infty$ means that $T(1)$ is represented by an essentially
bounded function. That is, there exists $b\in L^\infty({\mathbb R}^n)$ such that
$$
\langle T(1),\varphi\rangle
=
\int_{{\mathbb R}^n} b(x)\varphi(x)\,dx
\qquad
\text{for all } \varphi\in{\mathscr D}_0({\mathbb R}^n).
$$
Such a representative $b$ is unique only up to an additive constant.

\begin{prop}\label{t1b}
Let $T$ be a Calder\'on--Zygmund operator. Then $T(1)\in L^\infty$
if and only if
$$
T=\widetilde T+bI,
$$
where $\widetilde T$ is a Calder\'on--Zygmund operator satisfying
$\widetilde T(1)=0$, $b\in L^\infty$, and $I$ is the identity operator.
\end{prop}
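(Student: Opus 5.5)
We prove the two directions separately. Both rest on one observation: the multiplication operator $bI$ with $b\in L^\infty$ has kernel identically zero off the diagonal. Indeed, for $x\notin\operatorname{supp} f$ we have $(bI)f(x)=b(x)f(x)=0$, so we may take $K_{bI}\equiv 0$. Moreover, $bI$ is bounded on $L^1$, hence of weak type $(1,1)$. Therefore $bI$ is itself a Calder\'on--Zygmund operator in the sense of the definition above, with kernel $0$ and any modulus $\o$.

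For the direction ``$\Leftarrow$'', suppose $T=\widetilde T+bI$. We compute $T(1)$ from the definition, using the fact that the kernel of $T$ coincides with that of $\widetilde T$. Let $\f\in{\mathscr D}_0$ and choose $\eta$ as in the definition. Then
$$\langle T(1),\f\rangle=\langle \widetilde T(\eta),\f\rangle+\int b\eta\f+\int\!\!\int K(x,y)\f(x)(1-\eta(y))\,dx\,dy=\langle\widetilde T(1),\f\rangle+\int b\f,$$
since $\eta=1$ on $\operatorname{supp}\f$. As $\widetilde T(1)=0$, this gives $T(1)=b\in L^\infty$.

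For the direction ``$\Rightarrow$'', let $b\in L^\infty$ represent $T(1)$ and set $\widetilde T:=T-bI$. By the linearity of the weak $L^{1,\infty}$ quasi-norm, $\widetilde T$ is linear and of weak type $(1,1)$:
$$|\{|\widetilde Tf|>2\la\}|\le|\{|Tf|>\la\}|+|\{\|b\|_\infty|f|>\la\}|\lesssim \|f\|_1/\la.$$
Off the support of $f$, the operator $\widetilde T$ has the same kernel $K$ as $T$, so the smoothness condition \eqref{smooth} is inherited. Hence $\widetilde T$ is a Calder\'on--Zygmund operator, and the computation in the previous paragraph, run in reverse, yields $\langle\widetilde T(1),\f\rangle=\langle T(1),\f\rangle-\int b\f=0$ for all $\f\in{\mathscr D}_0$. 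Thus $\widetilde T(1)=0$.

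The only delicate point is making sense of the pairing $\langle T(\eta),\f\rangle$ for operators that are only of weak type $(1,1)$. We must check that the $bI$ part contributes exactly $\int b\eta\f$, and that the decomposition $\langle T(\eta),\f\rangle=\langle\widetilde T(\eta),\f\rangle+\int b\eta\f$ is legitimate. Since $T\eta\in L^{1,\infty}$ is only locally integrable in the weak sense, I would interpret the pairing, as in \cite{G14}, through the $L^2$ boundedness of $T$. That boundedness follows from weak $(1,1)$ together with the standard Calder\'on--Zygmund theory and duality applied to the adjoint. Alternatively, one can simply note that the pairing is linear in the operator and that $bI$ maps $\eta$ to the bounded compactly supported function $b\eta$; linearity then justifies the splitting directly.
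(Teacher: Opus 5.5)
Your proof is correct and follows the paper's argument. The multiplication operator $bI$ is a trivial Calder\'on--Zygmund operator with zero off-diagonal kernel, so $\widetilde T=T-bI$ has the same kernel as $T$, and $\widetilde T(1)=T(1)-b$ follows from linearity of the definition of $T(1)$; you simply spell out details the paper leaves implicit, namely the weak type $(1,1)$ of the difference and the computation with $\eta$.

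Two points of wording are inaccurate but do not affect the argument. First, the weak $L^{1,\infty}$ quasi-norm is not linear; what you actually use is the displayed level-set estimate. Second, no adjoint/duality argument is needed to justify the pairing: local integrability of $T\eta$ is already presupposed by the definition of $T(1)$, and it passes to $\widetilde T\eta=T\eta-b\eta$.
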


\begin{proof}
Suppose that $T(1)=b$, where $b\in L^\infty$. Then define
$$
\widetilde T f:=Tf-bf.
$$
Since the multiplication operator $f\mapsto bf$ is a trivial
Calder\'on--Zygmund operator with zero off-diagonal kernel, $\widetilde T$
is again a Calder\'on--Zygmund operator. Moreover,
$$
\widetilde T(1)=T(1)-b=0
$$
in ${\mathscr D}_0'(\mathbb R^n)$.

Conversely, if $T=\widetilde T+bI$, where $\widetilde T(1)=0$ and
$b\in L^\infty$, then
$$
T(1)=\widetilde T(1)+b=b\in L^\infty.
$$
This proves the proposition.
\end{proof}

The following statement will be crucial for our purposes. Given a cube $Q$, denote $Q^*:=5{\sqrt n}Q$.

\begin{prop}\label{cr} Let $T$ be a Calder\'on--Zygmund operator satisfying $T(1)=0$. Then, for every cube $Q\subset {\mathbb R}^n$,
$$\esssup_{x',x''\in Q}|(T\chi_{Q^*})(x')-T(\chi_{Q^*})(x'')|\lesssim 1.$$
\end{prop}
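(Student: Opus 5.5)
The plan is to compare $T\chi_{Q^*}$ with $T(\theta_R)$ for large $R$, using Proposition \ref{eqdef} to see that $T(\theta_R)$ is almost constant in the sense of distributions. Write $\theta_R=\chi_{Q^*}+(\theta_R-\chi_{Q^*})$. For $R$ large (say $R$ much larger than the distance from the origin to $Q^*$ plus its diameter), the function $g_R:=\theta_R-\chi_{Q^*}$ vanishes on $Q^*$. Since $Q^*=5\sqrt n Q$ and $Q$ has diameter $\sqrt n\,\ell(Q)$, every $y\notin Q^*$ satisfies $|x-y|\ge 2\sqrt n\,\ell(Q)\ge 2|x'-x''|$ for $x,x',x''\in Q$. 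Hence $Tg_R(x)=\int K(x,y)g_R(y)\,dy$ for $x\in Q$, and the kernel smoothness \eqref{smooth} gives
$$|Tg_R(x')-Tg_R(x'')|\le \int_{{\mathbb R}^n\setminus Q^*}\o\Big(\frac{|x'-x''|}{|x'-y|}\Big)\frac{dy}{|x'-y|^n}\lesssim \int_0^1\o(t)\frac{dt}{t}\lesssim 1,$$
uniformly in $R$, after summing over dyadic annuli around $x'$ (using $|g_R|\le 1$). This is the standard Dini estimate.

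Next, combine this with $T(1)=0$. Take $\f\in{\mathscr D}_0$ supported in $Q$. By Proposition \ref{eqdef},
$$\langle T\chi_{Q^*},\f\rangle=\lim_{R\to\infty}\big(\langle T\theta_R,\f\rangle-\langle Tg_R,\f\rangle\big)=-\lim_{R\to\infty}\langle Tg_R,\f\rangle.$$
To make this a pointwise statement, fix a point $x_0\in Q$ and set $h_R(x):=Tg_R(x)-Tg_R(x_0)$ on $Q$. The functions $h_R$ are bounded on $Q$ uniformly in $R$, and since $\f$ has mean zero, $\langle Tg_R,\f\rangle=\int h_R\f$. Moreover, $h_R$ converges pointwise on $Q$ as $R\to\infty$. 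Indeed, $h_R(x)=\int (K(x,y)-K(x_0,y))g_R(y)\,dy$, where $g_R\to\chi_{(Q^*)^c}$ pointwise and the integrand is dominated by the integrable function $\o(\cdot)|x_0-y|^{-n}$ on $(Q^*)^c$. Dominated convergence therefore gives a limit $h(x)=\int_{(Q^*)^c}(K(x,y)-K(x_0,y))\,dy$ with $|h|\lesssim 1$ on $Q$, and $\langle T\chi_{Q^*},\f\rangle=-\int h\f$ for all mean-zero test functions $\f$ supported in $Q$.

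Finally, $T\chi_{Q^*}\in L^1_{\rm loc}$, at least in the weak $L^{1}$ sense via the weak type $(1,1)$ assumption, so it is a locally integrable function. The previous identity then says that $T\chi_{Q^*}+h$ annihilates every mean-zero test function on the interior of $Q$, so it equals a constant $c$ a.e. on $Q$. Consequently $T\chi_{Q^*}=c-h$ a.e. on $Q$, and the oscillation bound $\esssup_{x',x''\in Q}|T\chi_{Q^*}(x')-T\chi_{Q^*}(x'')|\le 2\|h\|_{L^\infty(Q)}\lesssim 1$ follows.

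The main obstacle is this last identification. One must make sure that pairing $T\chi_{Q^*}$ against $\f$ is legitimate, and that the definition of $T(1)$ via $\theta_R$ applies with the splitting through the non-smooth $\chi_{Q^*}$. I would handle this by noting that $\langle T\theta_R,\f\rangle=\langle T\chi_{Q^*},\f\rangle+\langle Tg_R,\f\rangle$ holds by linearity whenever $T$ acts on $L^2$ or on bounded compactly supported functions (using $L^2$-boundedness, or weak $(1,1)$ plus local integrability). I would also check that the kernel representation of $Tg_R$ on $Q$ is valid, since $g_R$ is supported away from $\overline{Q}$.
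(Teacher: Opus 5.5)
Your proof is correct and follows the paper's argument. You use the same splitting $\theta_R=\chi_{Q^*}+\theta_R\chi_{{\mathbb R}^n\setminus Q^*}$ with Proposition \ref{eqdef}, the same use of $\int\varphi=0$ to subtract the value at a fixed point of $Q$, dominated convergence via the Dini bound, and the ``annihilates $\mathscr D_0$, hence constant a.e.''\ step; your $h$ is exactly $-F_Q$ when $x_0=x_Q$, and taking the pointwise limit of $h_R$ instead of applying Fubini is immaterial. One small correction: weak type $(1,1)$ alone does not give $T\chi_{Q^*}\in L^1_{\rm loc}$, since $L^{1,\infty}\not\subset L^1_{\rm loc}$, so this has to come from the framework (e.g.\ $L^2$-boundedness, as you suggest), which the paper also tacitly assumes.
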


\begin{proof}
For $x\in Q$, set
$$F_Q(x):=\int_{{\mathbb R}^n\setminus Q^*}\big(K(x_Q,y)-K(x,y)\big)dy,$$
where $x_Q$ is the center of $Q$. By the smoothness assumption (\ref{smooth}), the standard argument yields
\begin{equation}\label{F}
|F_Q(x)|\lesssim \sum_{k=1}^{\infty}\int_{2^kQ^*\setminus 2^{k-1}Q^*}\o\Big(\frac{\text{diam}\,Q}{|x-y|}\Big)\frac{1}{|x-y|^n}dy\lesssim \sum_{k=1}^{\infty}\o(2^{-k})\lesssim 1,
\end{equation}
uniformly in $x\in Q$.

Let us show that for every $\f\in {\mathscr D}_0$ supported in $Q$,
\begin{equation}\label{distr}
\langle T(\chi_{Q^*}),\f\rangle=\langle F_Q,\f\rangle.
\end{equation}
Once this is established, the proof follows immediately. Indeed,
(\ref{distr}) implies (see \cite[Ex. 3.2.2]{G14}) that there exists
a constant $c_Q$ such that
$$
T(\chi_{Q^*})(x)=F_Q(x)+c_Q
$$
almost everywhere on $Q$. The proof is then completed by (\ref{F}).

Turn to the proof of (\ref{distr}). Define $\theta_R$ as in Proposition \ref{eqdef}.
There exists $R_0>0$ such that for all $R\ge R_0$,
$$\theta_R=\chi_{Q^*}+\theta_R\chi_{{\mathbb R}^n\setminus Q^*},$$
and hence
$$\langle T(\chi_{Q^*}),\f\rangle=\langle T(\theta_R),\f\rangle-\langle T(\theta_R\chi_{{\mathbb R}^n\setminus Q^*}), \varphi\rangle.$$
From this, since $T(1)=0$, by Proposition \ref{eqdef},
$$\langle T(\chi_{Q^*}),\f\rangle=-\lim_{R\to\infty}\langle T(\theta_R\chi_{{\mathbb R}^n\setminus Q^*}), \varphi\rangle.$$

Next, since $\int_Q\f=0$, we have
\begin{eqnarray*}
&&\langle T(\theta_R\chi_{{\mathbb R}^n\setminus Q^*}), \varphi\rangle=\int_{{\mathbb R}^n\setminus Q^*}\theta_R(y)dy\int_{Q}K(x,y)\varphi(x)dx\\
&&=\int_{{\mathbb R}^n\setminus Q^*}\theta_R(y)dy\int_{Q}\big(K(x,y)-K(x_Q,y)\big)\varphi(x)dx.
\end{eqnarray*}
By (\ref{smooth}), the last integral is absolutely convergent uniformly in $R$. Therefore, by dominated convergence,
\begin{eqnarray*}
\langle T(\chi_{Q^*}),\f\rangle&=&-\int_{{\mathbb R}^n\setminus Q^*}dy\int_{Q}\big(K(x,y)-K(x_Q,y)\big)\varphi(x)dx\\
&=&\int_QF_Q(x)\varphi(x)dx,
\end{eqnarray*}
which proves (\ref{distr}) and completes the proof.
\end{proof}

\subsection{Rearrangements and median values}
Given a cube $Q$ and a measurable function $f$, the non-increasing rearrangement of $f$ on $Q$ is defined for $\la\in (0,1]$ by
$$(f\chi_Q)^*(\la|Q|):=\inf\big\{\a>0:|\{x\in Q:|f(x)|>\a|\le \la|Q|\big\}.$$

Define the median value of $f$ over $Q$ as a possibly non-unique number $m_f(Q)$ satisfying
$$|\{x\in Q:f(x)<m_f(Q)\}|\le \frac{|Q|}{2}$$
and
$$|\{x\in Q:f(x)>m_f(Q)\}|\le \frac{|Q|}{2}.$$

It follows from this definition and from Chebyshev's inequality that
\begin{equation}\label{medpr}
|m_f(Q)|\le \frac{2}{|Q|}\int_Q|f|.
\end{equation}

The next statement is well known. We include its proof for the sake of completeness.

\begin{prop}\label{medosc} For every $c\in {\mathbb R}$,
$$\int_Q|f-m_f(Q)|\le \int_Q|f-c|.$$
\end{prop}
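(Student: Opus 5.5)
The plan is to compare the two integrals through the distribution function of $f-c$ on $Q$, splitting $Q$ according to the position of $f$ relative to $m:=m_f(Q)$. By symmetry (replace $f$ by $-f$, which replaces $m$ by $-m$ and $c$ by $-c$), it suffices to treat the case $c\ge m$; if $c=m$ there is nothing to prove.

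So assume $c>m$. Partition $Q$ into $A:=\{x\in Q: f(x)\le m\}$, $B:=\{x\in Q: m<f(x)<c\}$ and $C:=\{x\in Q: f(x)\ge c\}$, and compare the integrands pointwise on each piece.
\begin{itemize}
\item On $A$ we have $|f-c|=c-f=|f-m|+(c-m)$.
\item On $C$ we have $|f-m|=f-m=|f-c|+(c-m)$.
\item On $B$ we have $|f-m|=f-m< c-m$, while $|f-c|\ge 0$, so $|f-m|\le |f-c|+(c-m)$.
\end{itemize}
Integrating over $Q$ gives
$$\int_Q|f-m|-\int_Q|f-c|\le (c-m)\big(|C|+|B|-|A|\big)=(c-m)\big(|\{f>m\}\cap Q|-|\{f\le m\}\cap Q|\big).$$

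It remains to show that the bracket is nonpositive. By the definition of the median, $|\{x\in Q: f(x)>m\}|\le |Q|/2$. Hence $|\{x\in Q: f(x)\le m\}|\ge |Q|/2$, and the bracket is $\le 0$. Since $c-m>0$, this yields the claim.

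The only point needing care is this last step. It uses just one of the two defining inequalities of $m_f(Q)$, and the set $A$ must include the level set $\{f=m\}$ so that the complementary bound holds. The case $c<m$ uses the other inequality, $|\{f<m\}\cap Q|\le |Q|/2$, and is handled by the symmetric argument. If $f\notin L^1(Q)$ then both sides are infinite, or the right-hand side is infinite, and the inequality is trivial; otherwise all the integrals above are finite, so the subtraction is legitimate.
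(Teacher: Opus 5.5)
Your proof is correct, but it takes a different route from the paper's.

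\textbf{The paper's route.} The paper avoids any case distinction. It chooses disjoint sets $A\subset\{x\in Q: f(x)\ge m\}$ and $B\subset\{x\in Q: f(x)\le m\}$, where $m=m_f(Q)$, with $|A|=|B|=\frac12|Q|$. It then notes that $\int_Q|f-m|=\int_A f-\int_B f$. Because $|A|=|B|$, this equals $\int_A(f-c)+\int_B(c-f)\le\int_Q|f-c|$ for every $c$.

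\textbf{Your route.} You compare integrands pointwise on $\{f\le m\}$, $\{m<f<c\}$ and $\{f\ge c\}$ for $c>m$, and reduce the case $c<m$ to this one by passing to $-f$. In effect you prove the convexity inequality for $c\mapsto\int_Q|f-c|$ at $m$. The right derivative there is $|\{f\le m\}\cap Q|-|\{f>m\}\cap Q|$, which is $\ge 0$ by one of the median inequalities.

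\textbf{Trade-offs.} The paper's argument is shorter and symmetric, and it gives the identity $\int_Q|f-m_f(Q)|=\int_A f-\int_B f$. However, producing $A$ and $B$ of measure exactly $\frac12|Q|$ requires splitting the level set $\{f=m_f(Q)\}\cap Q$ into two pieces of prescribed measure. This silently uses the fact that Lebesgue measure has no atoms. Your argument needs only one median inequality in each case, so it works unchanged for any finite measure.

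Your integrability remark is also fine. If $f\notin L^1(Q)$, then $\int_Q|f-c|=\infty$ for every $c$, so the inequality is trivial.
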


\begin{proof} There exist subsets
$$A\subset \{x\in Q:f(x)\ge m_f(Q)\}\quad\text{and}\quad B\subset \{x\in Q:f(x)\le m_f(Q)\}$$
such that $|A|=|B|=\frac{1}{2}|Q|$ and $A\cap B=\emptyset$. Hence, for every $c\in {\mathbb R}$,
\begin{eqnarray*}
\int_Q|f-m_f(Q)|&=&\int_A(f-m_f(Q))+\int_B(m_f(Q)-f)=\int_Af-\int_Bf\\
&=&\int_A(f-c)+\int_B(c-f)\le \int_Q|f-c|,
\end{eqnarray*}
and the proof is complete.
\end{proof}

\subsection{Operator-free sparse domination principle} Here we describe briefly the main result of \cite{LLO22}.
For every cube $Q\subset {\mathbb R}^n$, let $f_Q$ be a measurable function. For $R\in {\mathcal D}(Q)$ set $f_{R,Q}:=f_Q-f_R$ (${\mathcal D}(Q)$ denotes a collection of all cubes dyadic with respect to $Q$).
Define the sharp maximal function
$$m_Q^{\#}f(x):=\sup_{R\in {\mathcal D}(Q):x\in R}\esssup_{x',x''\in R}|f_{R,Q}(x')-f_{R,Q}(x'')|.$$

\begin{theorem}[{\cite[Theorem 3.2]{LLO22}}]\label{opfr} Let $\la_n=2^{-n-3}$. For every $Q_0\subset {\mathbb R}^n$, there exists a $\frac{1}{2}$-sparse family ${\mathcal S}\subset {\mathcal D}(Q_0)$ such that for a.e. $x\in Q_0$,
$$|f_{Q_0}(x)|\lesssim \sum_{P\in {\mathcal S}}\Big((f_P\chi_P)^*(\la_n|P|)+(m_P^{\#}f)^*(\la_n|P|)\Big)\chi_P(x).$$
\end{theorem}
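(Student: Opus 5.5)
The plan is a recursive stopping-time construction inside $Q_0$, built on the telescoping identity $f_{Q_0}=f_P+f_{P,Q_0}$ for $P\in\mathcal D(Q_0)$. At the top level, set
$$A:=(f_{Q_0}\chi_{Q_0})^*(\la_n|Q_0|),\qquad B:=(m^{\#}_{Q_0}f)^*(\la_n|Q_0|),$$
and define the exceptional set
$$E:=\{x\in Q_0:|f_{Q_0}(x)|>A\}\cup\{x\in Q_0:m^{\#}_{Q_0}f(x)>B\}.$$
By the definition of the rearrangement, $|E|\le 2\la_n|Q_0|=2^{-n-2}|Q_0|$.

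\textbf{Stopping cubes.} Let $\{P_j\}$ be the maximal cubes $P\in\mathcal D(Q_0)$ with $|P\cap E|>2^{-n-1}|P|$. These are the Calder\'on--Zygmund cubes of $\chi_E$ at height $2^{-n-1}$. Since $|E|<2^{-n-1}|Q_0|$, the cube $Q_0$ itself is not selected. The standard argument then gives
$$\sum_j|P_j|\le 2^{n+1}|E|\le \tfrac12|Q_0|.$$
Off $\bigcup_jP_j$, Lebesgue differentiation shows that a.e. $x$ lies outside $E$, so $|f_{Q_0}(x)|\le A$ there.

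\textbf{Key step: bounding $f_{P_j,Q_0}$ on $P_j$.} Since the parent $\widehat P_j$ was not selected,
$$|P_j\cap E|\le|\widehat P_j\cap E|\le 2^{-n-1}2^n|P_j|=\tfrac12|P_j|.$$
The set $\{x\in P_j:|f_{P_j}(x)|\le (f_{P_j}\chi_{P_j})^*(\la_n|P_j|)\}$ has measure at least $(1-\la_n)|P_j|$. It therefore meets $P_j\setminus E$ in a set of positive measure, and we choose a point $x''$ there. For a.e. $x\in P_j$ we then have
$$|f_{P_j,Q_0}(x)|\le |f_{P_j,Q_0}(x'')|+m^{\#}_{Q_0}f(x'')\le |f_{Q_0}(x'')|+|f_{P_j}(x'')|+B.$$
Here we used that $P_j\ni x''$ is one of the cubes in the supremum defining $m^{\#}_{Q_0}f(x'')$. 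The right-hand side is at most $A+B+(f_{P_j}\chi_{P_j})^*(\la_n|P_j|)$. Hence, for a.e. $x\in Q_0$,
$$|f_{Q_0}(x)|\le (A+B)\chi_{Q_0}(x)+\sum_j (f_{P_j}\chi_{P_j})^*(\la_n|P_j|)\chi_{P_j}(x)+\sum_j|f_{P_j}(x)|\chi_{P_j}(x).$$

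\textbf{Iteration and sparseness.} The last sum has the same form as the original problem, with $Q_0$ replaced by each $P_j$. I would iterate the construction, applying it to $f_{P_j}$ on $P_j$ with its own $m^{\#}_{P_j}$. The middle sum is absorbed into the terms $(f_P\chi_P)^*(\la_n|P|)$ attached to the next-generation cubes. Collect $\mathcal S$ as all cubes produced across generations. Since the children of each $P\in\mathcal S$ occupy at most $\frac12|P|$, the sets $E_P:=P\setminus\bigcup(\text{children})$ are pairwise disjoint with $|E_P|\ge\frac12|P|$, so $\mathcal S$ is $\frac12$-sparse. The generations shrink geometrically in total measure, so a.e. point lies in only finitely many cubes and the recursion terminates pointwise a.e.

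\textbf{Main obstacle.} The delicate step is the choice of $x''$. It must simultaneously control $f_{Q_0}$, $m^{\#}_{Q_0}f$ and $f_{P_j}$, and the oscillation bound must apply on all of $P_j$ even though $P_j$ may meet $E$ heavily. Both issues are resolved by the parent-based estimate $|P_j\cap E|\le\frac12|P_j|$ combined with the small parameter $\la_n=2^{-n-3}$. That parameter is exactly what keeps $|E|$ below the selection threshold and leaves room for the third condition on $f_{P_j}$.
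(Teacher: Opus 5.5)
Your proposal is correct and is essentially the stopping-time argument of \cite{LLO22}, which the paper quotes without reproducing: the exceptional set $E$ with $|E|\le 2\lambda_n|Q_0|=2^{-n-2}|Q_0|$, the Calder\'on--Zygmund cubes of $\chi_E$ at height $2^{-n-1}$ (which give exactly the $\frac12$-sparseness), and a point $x''\in P_j\setminus E$ where $|f_{P_j}|$ is also controlled, followed by iteration. The one detail to state explicitly is that $x''$ must be chosen outside a null set. This is possible because you pick it from a set of positive measure, and it guarantees that $|f_{P_j,Q_0}(x)-f_{P_j,Q_0}(x'')|$ is bounded by the essential oscillation of $f_{P_j,Q_0}$ on $P_j$ for a.e. $x\in P_j$.
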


\section{Proof of Theorems \ref{mr} and \ref{sobt}}
In the proof below we use the same notation as in Proposition \ref{cr}, namely, $Q^*:=5{\sqrt n}Q$.

\begin{proof}[Proof of Theorem \ref{mr}] We will apply Theorem~\ref{opfr} with
$$f_Q(x):=T\big((f-m_f(Q^*))\chi_{Q^*}\big)(x).$$

Let $Q_0$ be an arbitrary cube. By Theorem \ref{opfr}, there exists a $\frac{1}{2}$-sparse family ${\mathcal S}\subset {\mathcal D}(Q_0)$ such that for a.e. $x\in Q_0$,
\begin{equation}\label{apopfr}
\begin{aligned}
& |T\big((f-m_f(Q_0^*))\chi_{Q_0^*}\big)(x)|\lesssim \\
& \sum_{P\in {\mathcal S}}\Big((f_P\chi_P)^*(\la_n|P|)+(m_P^{\#}f)^*(\la_n|P|)\Big)\chi_P(x).
\end{aligned}
\end{equation}

By the weak type $(1,1)$ of $T$ along with Proposition \ref{medosc},
\begin{equation}\label{firstp}
(f_P\chi_P)^*(\la_n|P|)\lesssim \frac{1}{|P|}\int_{P^*}|f-m_f(P^*)|\lesssim \O(f;P^*).
\end{equation}

We now turn to the estimate of $(m_P^{\#}f)^*(\la_n|P|)$. Let $x\in R\in {\mathcal D}(P)$. For $x',x''\in R$ consider
$$|f_{R,P}(x')-f_{R,P}(x'')|.$$
Observing that
\begin{eqnarray*}
f_{R,P}&=&T\big((f-m_f(P^*))\chi_{P^*}\big)-T\big((f-m_f(R^*))\chi_{R^*}\big)\\
&=&T\big((f-m_f(P^*))\chi_{P^*\setminus R^*}\big)+(m_f(R^*)-m_f(P^*))T(\chi_{R^*}),
\end{eqnarray*}
we have
\begin{eqnarray*}
&&|f_{R,P}(x')-f_{R,P}(x'')|\\
&&\le |T\big((f-m_f(P^*))\chi_{P^*\setminus R^*}\big)(x')-T\big((f-m_f(P^*))\chi_{P^*\setminus R^*}\big)(x'')|\\
&&+|m_f(R^*)-m_f(P^*)||T(\chi_{R^*})(x')-T(\chi_{R^*})(x'')|.
\end{eqnarray*}

By the smoothness condition (\ref{smooth}),
\begin{eqnarray}
&&|T\big((f-m_f(P^*))\chi_{P^*\setminus R^*}\big)(x')-T\big((f-m_f(P^*))\chi_{P^*\setminus R^*}\big)(x'')|\label{estpart}\\
&&\lesssim \int_{{\mathbb R}^n\setminus R^*}|(f-m_f(P^*))\chi_{P^*}(y)|\o\Big(\frac{\text{diam}\,R}{|x'-y|}\Big)\frac{1}{|x'-y|^n}dy\nonumber\\
&&\lesssim\sum_{k=1}^{\infty}\o(2^{-k})\frac{1}{|2^kR^*|}\int_{2^kR^*\setminus 2^{k-1}R^*}|(f-m_f(P^*))\chi_{P^*}(y)|dy\nonumber\\
&&\lesssim \Big(\int_0^1\o(t)\frac{dt}{t}\Big)M\big((f-m_f(P^*))\chi_{P^*}\big)(x).\nonumber
\end{eqnarray}

Next, by (\ref{medpr}),
\begin{eqnarray*}
|m_f(R^*)-m_f(P^*)|=|m_{f-m_f(P^*)}(R^*)|&\le& \frac{2}{|R^*|}\int_{R^*}|f-m_f(P^*)|\\
&\le& 2M\big((f-m_f(P^*))\chi_{P^*}\big)(x).
\end{eqnarray*}
Further, by Proposition \ref{cr},
$$
\esssup_{x',x''\in R}|T(\chi_{R^*})(x')-T(\chi_{R^*})(x'')|\lesssim 1,
$$
which, along with the previous estimate and (\ref{estpart}), implies
$$m_P^{\#}f(x)\lesssim M\big((f-m_f(P^*))\chi_{P^*}\big)(x).$$

From this, by the weak type $(1,1)$ of $M$ along with Proposition \ref{medosc},
$$(m_P^{\#}f)^*(\la_n|P|)\lesssim \frac{1}{|P|}\int_{P^*}|f-m_f(P^*)|\lesssim \O(f;P^*).$$
Combining this with (\ref{apopfr}) and (\ref{firstp}), we obtain that there exists a $\frac{1}{2}$-sparse family ${\mathcal S}\subset {\mathcal D}(Q_0)$ such that for a.e. $x\in Q_0$,
\begin{equation}\label{local}
|T\big((f-m_f(Q_0^*))\chi_{Q_0^*}\big)(x)|\lesssim \sum_{P\in {\mathcal S}}\O(f;P^*)\chi_{P}(x).
\end{equation}

Now, the rest of the proof follows the same lines as that of \cite[Lemma 2.1]{LO20}. Take a partition of ${\mathbb R}^n$ by cubes $Q_j$ such that $\text{supp}\,f\subset Q_j^*$, and moreover,
$|\text{supp}\,f|<\frac{1}{2}|Q_j^*|$ for every $j$.
For example, take a cube $S$ such that
$\text{supp}\,(f)\subset S$ and cover $3S\setminus S$ by $3^n-1$ congruent cubes $Q_j$. Each of them satisfies $S\subset 3Q_j\subset Q_j^*$.
Next, in the same way cover $9S\setminus 3S$, and so on. The union of resulting cubes, including $S$, will satisfy the desired properties.

Since $|\text{supp}\,f|<\frac{1}{2}|Q_j^*|$, we have $m_f(Q_j^*)=0$ for every $j$. Therefore, applying (\ref{local}) to every $Q_j$ instead of $Q_0$, we obtain a $\frac{1}{2}$-sparse family ${\mathcal F}_j\subset {\mathcal D}(Q_j)$ such that for almost every $x\in Q_j$,
$$|T(f)(x)|\lesssim \sum_{P\in {\mathcal F}_j}\O(f;P^*)\chi_{P}(x).$$
From this, setting ${\mathcal F}:=\cup_j{\mathcal F}_j$, we obtain that ${\mathcal F}$ is $\frac{1}{2}$-sparse and
$$|T(f)(x)|\lesssim \sum_{P\in {\mathcal F}}\O(f;P^*)\chi_{P}(x)$$
for almost every $x\in {\mathbb R}^n$. This finishes the proof with a $\frac{1}{2\cdot (5\sqrt n)^n}$-sparse family ${\mathcal S}:=\{P^*:P\in {\mathcal F}\}$.
\end{proof}

\begin{proof}[Proof of Theorem \ref{sobt}]
Suppose first that $T(1)\in L^{\infty}$. Then, by Proposition \ref{t1b} and by the Sobolev inequality (\ref{sob}), the proof reduces to showing that
\begin{equation}\label{suf}
|Tf(x)|\lesssim I_1(|\nabla f|)(x)
\end{equation}
for $T$ satisfying $T(1)=0$.

Assume that $T(1)=0$. Combining Theorem \ref{mr} with the $(1,1)$ Poincar\'e inequality
$$\O(f;Q)\lesssim \frac{1}{|Q|^{1-1/n}}\int_Q|\nabla f|,$$
we obtain
$$
|Tf(x)|\lesssim \sum_{Q\in {\mathcal S}}\Big(\frac{1}{|Q|^{1-1/n}}\int_Q|\nabla f|\Big)\chi_Q(x).
$$
Observe that the cubes in ${\mathcal S}$ here are not necessarily dyadic. However, by the three lattice theorem (see, e.g., \cite[Th. 3.1]{LN19}), the above estimate implies that there are $3^n$ dyadic lattices ${\mathscr D}_j$ and $\eta_n$-sparse families ${\mathcal S}_j\subset {\mathscr D}_j$ such that
\begin{equation}\label{dv}
|Tf(x)|\lesssim \sum_{j=1}^{3^n}\sum_{Q\in {\mathcal S}_j}\Big(\frac{1}{|Q|^{1-1/n}}\int_Q|\nabla f|\Big)\chi_Q(x).
\end{equation}

On the other hand, it is well-known \cite[Prop. 2.2]{CUM13} that for every dyadic lattice ${\mathscr D}$ and any non-negative locally integrable $g$,
$$\sum_{Q\in {\mathscr D}}\Big(\frac{1}{|Q|^{1-1/n}}\int_Qg\Big)\chi_Q(x)\lesssim I_1g(x).$$
This, along with (\ref{dv}), proves (\ref{suf}).

Suppose now that $T$ is a Calder\'on--Zygmund operator such that (\ref{suf}) holds for every $f\in C^1_c({\mathbb R}^n)$, $n\ge 2$, and for almost every
$x\in{\mathbb R}^n$. Let us show that $T(1)\in~L^{\infty}$.

Let $\theta_R$ be defined as in Proposition \ref{eqdef}. Observe that
$$
I_1(|\nabla\theta_R|)(x)=\int_{{\mathbb R}^n}\frac{|\nabla \theta(y)|}{|x/R-y|^{n-1}}dy.
$$
Since $\nabla\theta$ is bounded and with compact support, we have that this integral is uniformly bounded in $x$ and $R$.
Therefore, by (\ref{suf}),
$$
\|T(\theta_R)\|_{L^\infty}\lesssim 1
$$
uniformly in $R$. Hence, by Proposition \ref{eqdef}, for every
$\varphi\in{\mathscr D}_0({\mathbb R}^n)$,
$$
|\langle T(1),\varphi\rangle|
=
\lim_{R\to\infty}|\langle T(\theta_R),\varphi\rangle|
\le C\|\varphi\|_{L^1}.
$$
Thus $T(1)$ is bounded on ${\mathscr D}_0({\mathbb R}^n)$ with respect to the
$L^1$-norm. By the Hahn--Banach theorem and the duality
$(L^1)^*=L^\infty$, there exists $b\in L^\infty({\mathbb R}^n)$ such that
$$
\langle T(1),\varphi\rangle
=
\int_{{\mathbb R}^n} b(x)\varphi(x)\,dx
\qquad
\text{for all }\varphi\in{\mathscr D}_0({\mathbb R}^n).
$$
Therefore $T(1)\in L^\infty$, and the proof is complete.
\end{proof}

\end{document}